\newtheorem{theorem}{Theorem}
\numberwithin{theorem}{section}
\newtheorem{proposition}[theorem]{Proposition}
\newtheorem{conjecture}[theorem]{Conjecture}
\newtheorem{claim}[theorem]{Claim}
\newtheorem{problem}[theorem]{Problem}
\newcommand*{\myproofname}{Proof}
\newenvironment{claimproof}[1][\myproofname]{\begin{proof}[#1]}{\end{proof}}
\newcommand{\eps}{\varepsilon}
\DeclareMathOperator{\odd}{odd}
\tikzstyle{node}=[circle, draw, fill=blue!50,
\tikzstyle{graynode}=[circle, draw, fill=white!50,
\tikzstyle{point}=[circle, draw, fill=black,
\tikzstyle{graypoint}=[circle, draw, fill=gray!50,
\tikzstyle{root}=[circle, draw, fill=blue!50,
\tikzstyle{indset}=[circle, draw, fill=blue!50,
\tikzstyle{bigindset}=[circle, draw, fill=blue!50,
\title{Tree-like distance colouring for planar graphs of sufficient girth}
\author{
Ross J. Kang
\thanks{Radboud University Nijmegen, Netherlands.
Email: \protect\href{mailto:ross.kang@gmail.com}{\protect\nolinkurl{ross.kang@gmail.com}}.
This author is partially supported by a Vidi grant (639.032.614) of the Netherlands Organisation for Scientific Research (NWO).}
\and
Willem van Loon
\thanks{Radboud University Nijmegen, Netherlands.
Email: \protect\href{mailto:willem.g.vanloon@gmail.com }{\protect\nolinkurl{willem.g.vanloon@gmail.com }}.}
}
\begin{document}
\maketitle

\begin{abstract}
Given a multigraph $G$ and a positive integer $t$, the distance-$t$ chromatic index of $G$ is the least number of colours needed for a colouring of the edges so that every pair of distinct edges connected by a path of fewer than $t$ edges must receive different colours.
Let $\pi'_t(d)$ and $\tau'_t(d)$ be the largest values of this parameter over the class of planar multigraphs and of (simple) trees, respectively, of maximum degree $d$.
We have that $\pi'_t(d)$ is at most and at least a non-trivial constant multiple larger than $\tau'_t(d)$.
(We conjecture $\limsup_{d\to\infty}\pi'_2(d)/\tau'_2(d) =9/4$ in particular.)
We prove for odd $t$ the existence of a quantity $g$ depending only on $t$ such that the distance-$t$ chromatic index of any planar multigraph of maximum degree $d$ and girth at least $g$ is at most $\tau'_t(d)$ if $d$ is sufficiently large.
Such a quantity does not exist for even $t$.
We also show a related, similar phenomenon for distance vertex-colouring.

\smallskip
{\footnotesize
{\em Keywords}: graph colouring, distance colouring, planar graphs, girth.}
%MSC: 05C15, 05C35
\end{abstract}

\section{Introduction}\label{sec:intro}
Two classic theorems set a basis for our work.

The first is a result of Vizing from 1965~\cite{Viz65}: every (simple) planar graph of maximum degree $d$ has chromatic index at most $d$, provided $d\ge 8$. (This statement was extended to $d= 7$ by Sanders and Zhao~\cite{SaZh01} but remains open for $d=6$.)
On the other hand, the Shannon multigraphs, i.e.~triangles with edges of balanced multiplicity, are planar and have chromatic index a non-trivial factor (of $3/2$) greater than $d$.
Since the tree consisting of a single vertex with $d$ neighbours has chromatic index $d$, a rough way to view this is as follows: with respect to edge-colouring, planar multigraphs resemble trees if cycles of length $2$ are forbidden (for large enough maximum degree).

The second is a result of Gr\"otzsch from 1959~\cite{Gro59}: every triangle-free planar graph has chromatic number at most $3$. This is sharp due to the odd cycles. And so, even though there is a reduction by one in the number of colours required when the girth (the smallest cycle length) is at least $4$, there is no fixed set of cycle lengths one could forbid to achieve a bound of $2$, the optimal chromatic number over all trees.

We are curious how the above narrative extends if like-coloured elements must be at some minimum distance, and we address the following question.
\begin{quote}\em
For distance colouring of planar (multi)graphs, when does some finite girth constraint ensure the problem resembles that of trees?
\end{quote}
As noted, a girth constraint of $3$ suffices (and is optimal) for ordinary edge-colouring, while for ordinary vertex-colouring no such constraint is possible.

More formally, we study the following parameters. Fix a positive integer $t$. Let $G=(V,E)$ be a multigraph.
The {\em distance-$t$ chromatic index} ({\em number}) $\chi'_t(G)$ ($\chi_t(G)$) of $G$ is the least number of colours needed for a colouring of the edges (vertices) so that every pair of distinct edges (vertices) connected by a path of fewer than $t$ edges ($t+1$ edges) must receive different colours.
For $t=1$ these correspond to the usual chromatic index $\chi'(G)$ and chromatic number $\chi(G)$ of $G$. Moreover, if $H^t$ represents the graph whose adjacency matrix has $1$'s only where the off-diagonal entries in the $t$-th power of the adjacency matrix of $H$ are nonzero, then $\chi_t(G)=\chi(G^t)$ and $\chi'_t(G) = \chi(L(G)^t)$, where $L(G)$ denotes the line graph of $G$.

These natural strengthened colouring parameters have been studied for almost half a century~\cite{KrKr69}. They have attracted much attention particularly in the cases of $\chi'_2$, also called the {\em strong chromatic index}, cf.~e.g.~\cite{MoRe97}, and $\chi_2$, cf.~e.g.~\cite{Weg77,HHMR08}. That is not only because of their obvious mathematical appeal, but also because of their applicability in various other domains, such as the approximation of sparse Hessian matrices~\cite{McC83}, link and broadcast scheduling in wireless networks~\cite{LlRa92,RaLl93}, algorithmic hardness of approximation~\cite{Lae14}.
The results of the present paper offer basic insights into fundamental characteristics of these important graph parameters.

The low waterline for our study is when $G$ is of highest possible girth, i.e.~it is a tree.
Although the structure of $G^t$ or $L(G)^t$ can be difficult to comprehend for general $G$, it is straightforward to characterise the extremal behaviour of $\chi'_t(G)$ or $\chi_t(G)$ if $G$ has no cycles.
For $t\ge 1$ and $d\ge 3$, define
\begin{align*}
\tau'_t(d) & := 
\begin{cases}
%\frac{d}{d-2}((d-1)^{t/2}-1)+(d-1)^{t/2} & 2\mid t \\
\frac{1}{d-2}(2(d-1)^{t/2+1}-d) & 2\mid t \\
\frac{1}{d-2}(d(d-1)^{(t+1)/2}-d) & 2\nmid t
\end{cases} \ \ \text{ and}
\\
\tau_t(d) & :=
\begin{cases}
\frac{1}{d-2}(d(d-1)^{t/2}-2) & 2\mid t\\
%1+\frac{d}{d-2}((d-1)^{(t-1)/2}-1)+(d-1)^{(t-1)/2} & 2\nmid t\\
\frac{1}{d-2}(2(d-1)^{(t+1)/2}-2) & 2\nmid t
\end{cases}.
\end{align*}
\begin{proposition}\label{prop:tree}
Let $t\ge 1$ and $d\ge 3$.
There is a tree $G$ of maximum degree $d$ with $\tau'_t(d)$ edges ($\tau_t(d)$ vertices) such that $L(G)^t$ ($G^t$) is a clique.
If $G$ is a tree of maximum degree $d$, then $\chi'_t(G)\le \tau'_t(d)$ and $\chi_t(G)\le \tau_t(d)$.
\end{proposition}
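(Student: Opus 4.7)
The plan splits naturally into a construction for the first claim and an upper bound for the second.

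For the extremal construction, I would exhibit four trees, one for each combination of (edge vs.\ vertex colouring) $\times$ ($t$ even vs.\ odd). For $L(G)^t$ with odd $t=2s+1$, take the tree with a central vertex $v_0$ of degree $d$ whose vertices at tree-distance $k<s+1$ all have degree $d$ and whose vertices at distance $s+1$ are leaves. A geometric series gives $d\sum_{k=0}^{s}(d-1)^k=\tau'_{2s+1}(d)$ edges, and any two edges are within $L(G)$-distance $2s+1$ (worst case: two leaf edges in different branches through $v_0$), so $L(G)^t$ is a complete graph. For $L(G)^t$ with even $t=2s$, replace $v_0$ by a central edge $u_0v_0$ and iterate $s$ further levels of degree-$d$ branching on each side, giving $1+2(d-1)\sum_{k=0}^{s-1}(d-1)^k=\tau'_{2s}(d)$ edges. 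For $G^t$ the construction is identical with the parities of the centre swapped (vertex-central for $t$ even, edge-central for $t$ odd); the tree has diameter $t$, so $G^t$ is complete, and the counts match $\tau_t(d)$.

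For the upper bound, the key structural observation is that $L(G)^t$ and $G^t$ are chordal whenever $G$ is a tree. Let $G'$ denote the subdivision of $G$ obtained by inserting a midpoint $m_e$ on every edge $e\in E(G)$. For each edge $e$, let $T_e$ be the closed ball in $G'$ of radius $t$ around $m_e$; this is a subtree of $G'$. A direct computation gives $d_{G'}(m_{e_1},m_{e_2})=2\,d_{L(G)}(e_1,e_2)$, so the triangle inequality yields $T_{e_1}\cap T_{e_2}\ne\emptyset$ if and only if $d_{L(G)}(e_1,e_2)\le t$, i.e.\ $e_1$ and $e_2$ are adjacent in $L(G)^t$. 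By Gavril's theorem, intersection graphs of subtrees of a tree are chordal, so $L(G)^t$ is chordal. Taking $T_v$ to be the ball of radius $t$ in $G'$ around an original vertex $v$ shows $G^t$ is chordal as well.

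Since chordal graphs are perfect, $\chi'_t(G)=\omega(L(G)^t)$ and $\chi_t(G)=\omega(G^t)$. To bound these clique numbers, note that for any clique the associated subtrees of $G'$ pairwise intersect, so by the Helly property for subtrees of a tree they share a common point $v^*\in V(G')$; every clique-edge $e$ then satisfies $d_{G'}(m_e,v^*)\le t$. A case split on whether $v^*$ is an original vertex or an edge-midpoint reduces the count to the same geometric series used in the construction, with the maximum $\tau'_t(d)$ attained in the midpoint case when $t$ is even and the original-vertex case when $t$ is odd; the analogous argument yields $\omega(G^t)\le\tau_t(d)$. I expect the chordality step to be the main technical ingredient; once in place, the clique enumeration is essentially routine counting of balls in $d$-regular trees.
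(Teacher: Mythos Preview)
Your constructions coincide with the paper's (balls of appropriate radius around a central vertex or edge in the $d$-regular tree), so that half is identical.

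For the upper bound you take a genuinely different route. The paper argues directly and elementarily: order the edges by a breadth-first search rooted at a vertex (odd $t$) or an edge (even $t$) and colour greedily; the BFS ordering guarantees that each edge sees strictly fewer than $\tau'_t(d)$ previously coloured edges within distance $t$, so the greedy procedure never exceeds $\tau'_t(d)$ colours. Your approach instead proves the structural fact that $L(G)^t$ (and $G^t$) is chordal, via the subtree-intersection model in the subdivided tree, then invokes perfection to reduce to a clique bound, and finally uses the Helly property for subtrees to localise any clique around a common point and count. Both arguments are correct. The paper's BFS-greedy proof is shorter and self-contained, needing no external theorems. Your chordality proof is more conceptual: it yields the stronger conclusion $\chi'_t(G)=\omega(L(G)^t)$ (not merely $\le$) for every tree $G$, and it explains \emph{why} the greedy bound is tight. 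The cost is the appeal to Gavril's theorem and the Helly property, though both are standard.
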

\noindent
The examples in Proposition~\ref{prop:tree} are merely subgraphs of the infinite $d$-regular tree formed by including all vertices or edges within a suitable fixed distance of some root vertex or edge, depending on the parity of $t$.

The high waterline for us is when $G$ is a planar (multi)graph of maximum degree $d$.
In this case $\chi'_t(G)$ or $\chi_t(G)$ is still at most a constant factor greater than $\tau'_t(d)$ or $\tau_t(d)$, but it can well be a non-trivial factor greater. 
\begin{proposition}\label{prop:planar}
Let $t\ge 1$.
Let $\pi'_t(d)$ ($\pi_t(d)$) be the largest value of $\chi'_t(G)$ ($\chi_t(G)$) over the class of planar multigraphs of maximum degree $d$.
Then $\limsup_{d\to\infty} \pi'_t(d)/\tau'_t(d) \in[3/2,\infty)$ and $\limsup_{d\to\infty} \pi_t(d)/\tau_t(d)  \in[3/2,\infty)$.
\end{proposition}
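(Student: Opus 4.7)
The plan is to establish the two inclusions separately, beginning with the lower bound for the edge version; the vertex version is parallel.

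\emph{Lower bound $\liminf\geq 3/2$.} I would exhibit, for each $t$, an explicit family of planar multigraphs of maximum degree $d$ whose distance-$t$ chromatic index is at least $(3/2-o_d(1))\tau'_t(d)$. The prototype, handling $t=1$, is the classical Shannon multi-triangle: three vertices pairwise joined by $\lfloor d/2\rfloor$ parallel edges. It is planar, has maximum degree $d$, and its line graph is a clique of order $\lfloor 3d/2\rfloor$, matching $(3/2)\tau'_1(d)$ asymptotically. For $t\geq 2$, the plan is to augment a Shannon-type core (Shannon triangle for odd $t$, multi-edge for even $t$) with tree-like extensions of depth roughly $\lfloor(t-1)/2\rfloor$ attached at each hub, designed so that (i) the maximum degree stays at $d$, (ii) planarity is preserved by embedding each extension into a face of the Shannon core, and (iii) every pair of edges in the combined structure lies within $L(G)$-distance $t$. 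A direct edge count then yields $\chi'_t\geq(3/2-o_d(1))\tau'_t(d)$. The vertex analogue for $\pi_t$ proceeds identically, but with an odd cycle (e.g.\ $C_5$) in place of the Shannon triangle, since $\chi(C_5)=3=(3/2)\chi(\text{tree})$ makes odd cycles the vertex-colouring analogue of the Shannon construction.

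\emph{Upper bound $\limsup<\infty$.} The plan is to show $\chi'_t(G)=O(d^{\lceil t/2\rceil})$ and $\chi_t(G)=O(d^{\lceil t/2\rceil})$ for every planar multigraph $G$ of maximum degree $d$, which matches the orders $\tau'_t(d),\tau_t(d)=\Theta(d^{\lceil t/2\rceil})$ computed from the definitions. The standard route, which I would follow, is an Euler-formula layer-decomposition argument: using the planar inequality $|E(H)|\le 3|V(H)|-6$ iteratively across the shells of an $L(G)$-ball of radius $t$ (respectively a $G$-ball of radius $t$), one establishes that such a ball always contains only $O(d^{\lceil t/2\rceil})$ edges (resp.\ vertices). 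This yields an $O(d^{\lceil t/2\rceil})$-degeneracy bound for $L(G)^t$ (resp.\ $G^t$), and a greedy colouring in a degeneracy order finishes the proof.

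\emph{Main obstacle.} The technical heart lies in the lower-bound construction for $t\geq 2$. For $t=1$ the Shannon triangle is textbook and gives the $3/2$ factor exactly; but for larger $t$ one must interleave a Shannon-style multi-edge core with tree extensions whose depths grow with $t$, and verify that the Shannon-induced $3/2$ overhead is not diluted by the much larger tree-like contribution to the edge count. The parity-sensitive bookkeeping differs slightly for odd and even $t$, and one must simultaneously respect the degree bound $d$, the planarity constraint (by embedding extensions into the faces of the Shannon core), and the $L(G)$-distance $\leq t$ constraint across \emph{all} pairs of edges — it is this triple constraint that determines exactly how deep the extensions may reach and drives the $3/2$ ratio in the limit.
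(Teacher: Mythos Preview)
Your upper-bound plan is in the right spirit and essentially parallels what the paper does: it quotes the Agnarsson--Halld\'orsson bound $\chi_t(G)\le K_t\,\tau_t(d)$ for planar $G$ (proved via an arboricity/degeneracy argument of exactly the Euler-formula type you describe), and then derives the edge version by a contraction trick of Faudree~\emph{et al.}: edge-colour $G$ properly with $\lfloor 3d/2\rfloor$ colours via Shannon, contract each colour class to obtain a planar simple graph of maximum degree at most $2d-2$, and apply the vertex bound at distance $t-1$. One correction: your claim $\tau_t(d)=\Theta(d^{\lceil t/2\rceil})$ is false for odd $t$, where $\tau_t(d)\sim 2d^{(t-1)/2}=\Theta(d^{\lfloor t/2\rfloor})$; the layer argument must therefore hit this smaller order, which is the actual content of the Agnarsson--Halld\'orsson result and is not as routine as you suggest.

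The genuine gap is in your lower-bound constructions. Your odd-cycle idea for the vertex parameter does not work beyond $t=1$. The reasoning ``$\chi(C_5)=3=(3/2)\chi(\text{tree})$'' gives an \emph{additive} surplus of one colour, not a multiplicative factor: once you attach depth-$\lfloor (t-1)/2\rfloor$ trees to raise the maximum degree to $d$, the five cycle vertices contribute only $O(1)$ to any clique in $G^t$, while the trees contribute $\Theta(\tau_t(d))$, so the ratio you obtain is $1+o_d(1)$, not $3/2$. (Concretely, for $t=2$ a pendant of $v_i$ and a pendant of $v_{i+1}$ are already at distance $3$, so the largest clique in $G^2$ has only $d+1$ vertices.) The same objection applies to your ``multi-edge core'' for even-$t$ edge-colouring: a single bundle of parallel edges saturates the degree at both endpoints, and if you thin it to leave room for tree extensions the bundle's contribution becomes lower-order. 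What is actually needed is a core whose relevant clique already scales like $(3/2)d$ before the tree extensions are added. The paper uses the Shannon multigraph $\mathcal{S}_d$ (subdivided, then with trees rooted at the subdivision vertices) to obtain $3/2$ when the parity matches, and an octahedron-based multigraph $\mathcal{O}_d$ to obtain $9/4$ in the complementary parity; in particular the vertex lower bound for even $t$ comes from the subdivided Shannon triangle, which is exactly Wegner's construction when $t=2$, not from an odd cycle.
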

\noindent
Bounds in Proposition~\ref{prop:planar} for distance vertex-colouring were established generally for all $t$ in~\cite{AgHa03} (upper bounds) and~\cite{FHS98} (lower bounds).
It is difficult to determine the precise values in Proposition~\ref{prop:planar}, especially for distance vertex-colouring: 
$\limsup_{d\to\infty} \pi_1(d)/\tau_1(d)=2$ is the Four Colour Theorem~\cite{ApHa77,AHK77}, while $\limsup_{d\to\infty} \pi_2(d)/\tau_2(d) = 3/2$ is the asymptotic confirmation of Wegner's Conjecture~\cite{HHMR08}.
Also $\limsup_{d\to\infty} \pi'_1(d)/\tau'_1(d) = 3/2$ is Shannon's Theorem~\cite{Sha49} and in Conjecture~\ref{conj:strongplanar} (see also Proposition~\ref{prop:strongcliqueplanar}) we posit $\limsup_{d\to\infty} \pi'_2(d)/\tau'_2(d) = 9/4$. For $t\ge3$, no precise value is known.

Our main result completely resolves the question displayed earlier and says that $\chi'_t(G)$ or $\chi_t(G)$ for planar graphs $G$ of maximum degree $d$ and large enough girth must be at most $\tau'_t(d)$ or $\tau_t(d)$, respectively, but only provided $t$ is of the correct parity. Informally, planar distance colouring becomes tree-like for high enough girth, in the right parities of distance.

\begin{theorem}\label{thm:main}
For odd (even) $t\ge 1$, there exists $g'_t$ ($g_t$) such that, 
provided $d$ is large enough, every planar graph $G$ of maximum degree $d$ and girth at least $g'_t$ ($g_t$) has $\chi'_t(G)\le \tau'_t(d)$ ($\chi_t(G)\le \tau_t(d)$). For the other parity of $t$, such a quantity does not exist. 
\end{theorem}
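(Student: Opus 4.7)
The plan is to split Theorem~\ref{thm:main} into its positive and negative halves and handle edge-colouring and vertex-colouring in parallel within each half. The guiding intuition is that once the girth $g$ exceeds $2t$, every ball of radius $t$ in $G$ is a subtree of the $d$-regular infinite tree, so locally $G^t$ and $L(G)^t$ look exactly like their tree counterparts. Whether this local tree-likeness lifts to the claimed global bound then depends on the parity of $t$.

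\textbf{Positive direction (odd $t$ for edges, even $t$ for vertices).} I would take $g'_t$ (respectively $g_t$) somewhat larger than $2t$ so that balls of radius $t$ are acyclic. This yields the clique bounds $\omega(L(G)^t)\le \tau'_t(d)$ and $\omega(G^t)\le \tau_t(d)$: in these two parities the extremal distance-$t$ clique of the $d$-regular infinite tree is the set of edges (respectively vertices) in a ball of radius $(t-1)/2$ around an edge or $t/2$ around a vertex, and this ball embeds isometrically into $G$. To upgrade the clique bound to a chromatic-number bound, I would induct on $|V(G)|$: locate a reducible configuration (typically a low-degree vertex or a long pendant sub-tree), delete it, colour the smaller graph with $\tau_t(d)$ or $\tau'_t(d)$ colours by induction, and extend greedily, relying on the local tree structure to furnish enough free colours. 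Existence of such a reducible configuration is then verified by a discharging argument exploiting the bound $\mathrm{mad}(G) < 2 + 4/(g-2)$ for planar $G$ of girth $\ge g$; the hypothesis that $d$ is large supplies the slack needed at the greedy-extension step.

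\textbf{Negative direction (even $t$ for edges, odd $t$ for vertices).} For every prescribed girth bound $g$ I would construct a simple planar graph $G$ of maximum degree $d$ and girth at least $g$ with $\chi_t(G) > \tau_t(d)$ (in the odd-$t$ vertex case) or $\chi'_t(G) > \tau'_t(d)$ (in the even-$t$ edge case). The prototype is an odd cycle $C_n$ with $n$ odd and $n \ge g$ whose vertices or edges are decorated with copies of the extremal tree of Proposition~\ref{prop:tree} witnessing $\tau_t(d)$ or $\tau'_t(d)$, glued together so that planarity and maximum degree $d$ are preserved. The parity of the underlying cycle obstructs a globally tree-optimal colouring: any $\tau$-colouring already uses the entire palette on the decorated ball around a single vertex or edge of the cycle, so the constraint of closing up an odd-length cycle forces at least one additional colour.

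The main obstacle is the positive direction, and in particular the design of the discharging argument together with the verification that the reducible configuration extends any partial distance-$t$ colouring of the smaller graph. The quantitative value of $g'_t$ or $g_t$ emerging from such an argument is likely far from optimal, but the existence of a $t$-dependent threshold is all that is required. The parity distinction is inescapable: for the "right" parity the extremal tree clique is centred on an edge or a vertex in a way compatible with shrinking by removing pendant subtrees in the planar setting, whereas for the "wrong" parity the cyclic geometry of a long odd face prevents any high-girth hypothesis from eliminating the extra colour.
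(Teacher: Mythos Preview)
Your outline matches the paper's approach in both halves: the negative construction is exactly an odd cycle with $d-2$ pendant copies of ${\mathcal T}_{\lfloor t/2\rfloor,d}$ rooted at each vertex (Proposition~\ref{prop:other}), and the positive half is indeed a minimal-counterexample argument whose unavoidable reducible configuration, in the paper, is a path of roughly $t^2$ consecutive \emph{weak} vertices (a $k$-vertex being weak if $k-2$ of its incident edges lead into pendant subtrees, which in a minimal counterexample must have height at most $(t-1)/2$), with non-existence forced by applying Euler's formula after stripping the pendant trees and contracting maximal degree-$2$ paths. Your placeholders ``low-degree vertex'' and ``long pendant sub-tree'' point at the right ingredients, but it is their combination into this weak-vertex path---and the case analysis bounding $|N_t(e)|$ for an edge $e$ deep inside one of the attached short trees---where the parity of $t$ actually does its work.
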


\noindent
By Proposition~\ref{prop:tree}, the bounds are sharp.
As mentioned, $g'_1$ is optimally at most $3$. It has also been shown that $g_2$ is optimally at most $7$~\cite{BGINT04}. For larger $t$, we have explicit bounds on $g'_t$ and $g_t$ (see Theorems~\ref{thm:girthedge},~\ref{thm:girthedgeimproved}, and~\ref{thm:girthvertex}), but we made little attempt to optimise them. On the contrary, we opted for clean proofs that apply in general. %We use the discharging method.

It remains unclear what is the situation for the ``other'' distance parities. Are there natural analogues to Gr\"otzsch's Theorem in those cases?

\begin{problem}
For even (odd) $t\ge 1$, what is the least $c'_t\ge1$ ($c_t\ge1$) such that there exists some fixed $g$ for which the following holds?
For every $\eps>0$, provided $d$ is large enough, every planar graph $G$ of maximum degree $d$ and girth at least $g$ has $\chi'_t(G)\le (c'_t+\eps) \tau'_t(d)$ ($\chi_t(G)\le (c_t+\eps) \tau_t(d)$).
\end{problem}

\noindent
By Proposition~\ref{prop:planar}, the constants $c'_t$ and $c_t$ are well-defined, but we do not know yet if  $c'_t < \limsup_{d\to\infty} \pi'_t(d)/\tau'_t(d)$ or  $c_t < \limsup_{d\to\infty} \pi_t(d)/\tau_t(d)$ in general.
Gr\"otzsch's Theorem says $c_1 \le 3/2 < \limsup_{d\to\infty} \pi_1(d)/\tau_1(d)$, and it also implies
that $c'_2 \le 3/2 < \limsup_{d\to\infty} \pi'_2(d)/\tau'_2(d)$~\cite{CMPR14}.

Moreover, there remains the possibility that, for even $t$, there is a function $g'_t(d)$ (so with a dependence upon $d$) such that every planar graph $G$ of maximum degree $d$ and girth at least $g'_t(d)$ has $\chi'_t(G)\le \tau'_t(d)$, provided $d$ is large enough. The existence of $g'_2(d)$ and it being at most linear in $d$ was proven in~\cite{BoIv13}.
For odd $t$ and distance-$t$ vertex-colouring the existence of an analogous $g_t(d)$ is impossible.
This follows from Proposition~\ref{prop:other}, where we also prove for even $t$ that $g'_t(d)$, if it exists, is at least of order $\tau'_t(d)$.

\subsubsection*{Structure of the paper}

In Section~\ref{sec:trees}, we describe the proof of Proposition~\ref{prop:tree} as a warm up. In Section~\ref{sec:planar} we prove Proposition~\ref{prop:planar} assuming the result of Agnarsson and Halld\'orsson. We also state a conjecture about the optimal value of the strong chromatic index of planar multigraphs and give some supporting evidence. In Section~\ref{sec:girth}, we prove Theorem~\ref{thm:main}, with more emphasis on distance edge-colouring. Our paper ends with the description of certificates for non-existence of a valid girth constraint in the ``other'' parities of $t$.

\subsubsection*{Notation and terminology}

Let $G$ be a graph. Then $V(G)$ and $E(G)$ are the vertex and edge set of $G$, respectively.
For a vertex $v \in V(G)$, we write $N(v)$ for the set of neighbours of $v$ in $G$ and $\deg(v)$ for the degree of $v$, i.e.~$\deg(v)=|N(v)|$.
If $\deg(v) = k$, we call $v$ a $k$-vertex.
If $G$ is a plane graph, i.e.~a graph with an embedding in the plane, we write $F(G)$ for the face set of $G$. For $f \in F(G)$, we write $\deg(f)$ for the number of edges counted in a closed walk along the boundary around $f$ (so edges visited twice are counted twice).

For two vertices $v_1,v_2\in V(G)$, the distance between $v_1$ and $v_2$ refers to the number of edges in a shortest path between $v_1$ and $v_2$.
For two edges $e_1,e_2\in V(G)$, the distance between $e_1$ and $e_2$ refers to the distance between their respective vertices in the line graph $L(G)$.
For a vertex $v\in V(G)$ and edge $e\in E(G)$, the distance between $v$ and $e$ is the number of edges in a shortest path between $v$ and one of the endpoints of $e$.

For an edge $e\in E(G)$ (a vertex $v\in V(G)$), we write $N_k(e)$ ($N_k(v)$) for the set of edges $\ne e$ (vertices $\ne v$) at distance at most $k$ in $L(G)$  (in $G$) from the edge $e$ (vertex $v$), and call it the distance-$k$ neighbourhood of $e$ ($v$).

We refer to a distance-$t$ edge-colouring (vertex-colouring) as a colouring $c: E(G)\to \mathbb{Z}^+$ ($c: V(G)\to \mathbb{Z}^+$) of the edges (vertices) so that every two distinct edges (vertices) of $G$ at distance at most $t$ have different colours.

\subsubsection*{A simple tree we repeatedly use}
Since we use it several times throughout the paper, for integers $k\ge 0$ and $d\ge 3$ let us define ${\mathcal T}_{k,d}$ as the internally $d$-regular tree of height $k$ that is rooted at the endpoint of a leaf. So this is a single vertex if $k=0$, a single edge if $k=1$, and a $d$-pointed star if $k=2$.
We also write
\begin{align*}
\iota_{k,d} := \sum_{i=0}^{k-1}(d-1)^i=\frac{1}{d-2}((d-1)^k-1).
\end{align*}
Note ${\mathcal T}_{k,d}$ has $\iota_{k,d}$ edges and the same number of non-root vertices.

\section{Trees}\label{sec:trees}

The result is not new, but we for completeness explain the simple proof of Proposition~\ref{prop:tree}. Since the arguments are very similar for distance edge- and vertex-colouring, we only treat distance edge-colouring.

\begin{proof}[Proof of Proposition~\ref{prop:tree} for $\chi'_t$]
We start with the constructions. If $t$ is odd, then let $G$ be the subgraph of the infinite $d$-regular tree defined by taking all vertices within distance $(t+1)/2$ of some fixed root vertex and letting $G$ be the induced subtree. Then $G$ is the same as $d$ copies of ${\mathcal T}_{(t+1)/2,d}$ all rooted at the same vertex. The number of edges in $G$ is (since $d\ge 3$)
\begin{align*}
d\cdot \iota_{(t+1)/2,d} = \tau'_t(d)
\end{align*}
and, via the root vertex, all edges are within distance $t$ in $L(G)$, i.e.~$L(G)^t$ is a clique and $\chi'_t(G) = \tau'_t(d)$.

If instead $t$ is even, then take all vertices within distance $t/2$ of some fixed root edge for $G$. Then $G$ is the same as the graph formed from a single edge by rooting $d-1$ copies of ${\mathcal T}_{t/2,d}$ to each of its endpoints. The number of edges in $G$ is (since $d\ge 3$)
\begin{align*}
1+2(d-1)\cdot \iota_{t/2,d} = \tau'_t(d).
%(d-1)^{t/2} + d\sum_{i=0}^{t/2-1}(d-1)^i = (d-1)^{t/2} + d \frac{(d-1)^{t/2}-1}{d-1-1} = \tau'_t(d).
\end{align*}
Via the root edge or its endpoints, all edges are within distance $t$ in $L(G)$.

Next for the upper bound, assume $G$ is some tree with maximum degree $d$.
If $t$ is odd, then order the edges of $G$ as follows. Nominate an arbitrary root vertex and perform a breadth-first search rooted at that vertex. Order the edges according to their first traversal time during this search. Colour the edges greedily, so each time choosing as a colour the least positive integer available under the distance constraint. By the properties of the breadth-first search ordering, at each step the number of previously coloured edges within distance $t$ (in $L(G)$) is strictly smaller than $\tau'_t(d)$. So the number of colours used does not exceed $\tau'_t(d)$.

If instead $t$ is even, then we order the edges of $G$ according to a breadth-first search tree rooted at some nominated edge. Similarly, the number of colours used during a greedy colouring procedure performed according to this ordering does not exceed $\tau'_t(d)$.
\end{proof}

\section{Planar (multi)graphs}\label{sec:planar}

In this section, we first prove Proposition~\ref{prop:planar} and then make a few specific remarks about the strong chromatic index of planar multigraphs.

Agnarsson and Halld\'orsson previously established the upper bounds in Proposition~\ref{prop:planar} for distance vertex-colouring by suitably bounding the arboricity of $G^t$ for any planar graph $G$ of maximum degree $d$.

\begin{theorem}[Agnarsson and Halld\'orsson~\cite{AgHa03}]\label{thm:AgHa}
Let $t\ge 1$.
There is a constant $K_t$ such that for all $d$ every planar graph $G$ of maximum degree $d$ has $\chi_t(G) \le K_t\cdot \tau_t(d)$. In other words, $\pi_t(d)/\tau_t(d) \le K_t$ for all $d$.
\end{theorem}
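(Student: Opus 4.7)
My plan is to show that for any planar $G$ of maximum degree $d$, the power $G^t$ is $O(\tau_t(d))$-degenerate; a greedy colouring along a degeneracy ordering then yields $\chi(G^t)\le K_t\cdot\tau_t(d)$. Equivalently, by Nash--Williams' formula, it suffices to prove that $G^t$ has arboricity $O(\tau_t(d))$, i.e.~that every induced subgraph $G^t[S]$ satisfies $|E(G^t[S])| = O(\tau_t(d)\cdot|S|)$. The passage from an arboricity bound $a$ to a chromatic bound $2a$ is classical, since arboricity $a$ forces maximum average degree strictly less than $2a$, hence $(2a-1)$-degeneracy.

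To control $|E(G^t[S])|$, I would consider the underlying planar subgraph $G'\subseteq G$ on $S$ augmented with all Steiner vertices of $G$ needed to realise the edges of $G^t[S]$ as shortest $G$-paths of length at most $t$. Each edge $\{u,v\}$ of $G^t[S]$ is then charged to the midpoint of some fixed shortest $u$--$v$-path in $G'$: a vertex if $t$ is even, an edge if $t$ is odd. The midpoint lies at distance at most $\lceil t/2\rceil$ from both endpoints, so each pair is witnessed by an object in the $\lceil t/2\rceil$-ball of its midpoint. Aggregating the charges over midpoints and using Euler's formula on $G'$ (in particular the sparsity bound $|E(G')|\le 3|V(G')|$) should yield the target bound $O(\tau_t(d)\cdot|S|)$ on the total charge.

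The main obstacle is that a single midpoint can in principle witness up to $\tau_{\lceil t/2\rceil}(d)^2$ pairs at once, which already exceeds the target $\tau_t(d)$; so the desired bound cannot come from a local count at one midpoint and must be amortised globally. To realise this amortisation I would iteratively peel off vertices of small planar degree from $G'$, at each step charging the $G^t$-degree of the peeled vertex against its $\lceil t/2\rceil$-ball in the current planar subgraph and using planarity to propagate the bound to later peelings. This is essentially the route taken by Agnarsson and Halld\'orsson~\cite{AgHa03}, to whose paper I would defer the quantitative discharging calculation that yields the explicit constant $K_t$.
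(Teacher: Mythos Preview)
The paper does not give its own proof of this theorem; it is stated with attribution to Agnarsson and Halld\'orsson~\cite{AgHa03} and invoked as a black box in the proof of Proposition~\ref{prop:planar}. The only information the paper supplies about the method is the sentence preceding the statement: that the bound is obtained ``by suitably bounding the arboricity of $G^t$ for any planar graph $G$ of maximum degree $d$.'' Your sketch is consistent with that one-line description and explicitly defers the quantitative work to~\cite{AgHa03}, so there is nothing further to compare.
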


\noindent
For the corresponding distance edge-colouring upper bounds, we substitute Theorem~\ref{thm:AgHa} into an elegant argument due to Faudree {\em et al.}~\cite{FSGT90}. Faudree {\em et al.}~settled the case $\chi'_2$ for simple planar graphs with an analogous constant of $4$ (which is sharp).
It is worth mentioning that the distance edge-colouring bounds in  Proposition~\ref{prop:planar} considerably improve upon earlier bounds of Jendrol' and Skupie\'n~\cite{JeSk01}.

\begin{proof}[Proof of Proposition~\ref{prop:planar}]
As just mentioned, for the upper bounds it only remains to consider distance edge-colouring by Theorem~\ref{thm:AgHa}.
Let $G$ be a planar multigraph of maximum degree $d$. By Shannon's Theorem~\cite{Sha49}, $\chi'_1(G) \le 3d/2$. This not only implies that we may assume $t\ge 2$, but also it certifies that there is a proper edge-colouring of $G$ using colours from $\{1,\dots,\lfloor 3d/2\rfloor\}$.
Define for each colour $i$ the set $E_i$ as the set of edges of $G$ with colour $i$, and $H_i$ the simple graph which is formed from $G$ by contracting all edges in $E_i$ and then removing any loops or multiple edges. Since edge contraction preserves planarity, $H_i$ is planar for every $i$. Furthermore, the maximum degree of $H_i$ is at most $2d-2$. By Theorem~\ref{thm:AgHa}, $H_i$ admits a distance-$t$ vertex-colouring $c_i: V(H_i)\to \lfloor K_{t-1}\cdot \tau_{t-1}(2d-2)\rfloor$. Consider the partial edge-colouring of $G$ that assigns to each edge $e$ of $E_i$ the colour assigned by $c_i$ to the vertex in $H_i$ corresponding to the contracted $e$. Note that, by the definition of the $H_i$ and $c_i$, the edge-colouring of $G$ that combines all $\lfloor 3d/2\rfloor$ of these partial edge-colourings taken over disjoint colour sets has the correct distance property. Moreover, the number of colours used is
\begin{align*}
\lfloor 3d/2\rfloor \cdot \lfloor K_{t-1}\cdot \tau_{t-1}(2d-2)\rfloor \le (3\cdot 2^{\lfloor t/2\rfloor-1} \cdot K_{t-1}+\eps) \cdot \tau'_t(d)
\end{align*}
(for $d$ large enough), which implies the desired upper bound.

\smallskip

We next present the lower bound certificates. We give two types of construction. With respect to distance vertex-colouring, similar constructions with the same leading constants $3/2$ and $9/2$ were given by Fellows, Hell and Seyffarth~\cite{FHS98}. The constructions we give are simple and may furnish insight prior to our main result.
Without loss of generality assume $d$ is even.

Let ${\mathcal S}_d$ be the $d$-regular Shannon multigraph where each edge of the triangle has multiplicity $d/2$. This multigraph has $3d/2$ edges and $3$ vertices.
Note that ${\mathcal S}_d$ is planar and $L({\mathcal S}_d)$ is a clique.

Let ${\mathcal O}_d$ be the planar multigraph formed from the graph of the octahedron as follows. Let $\tau_1$ and $\tau_2$ be two triangles corresponding to opposite faces of the octahedron. Give every edge of $\tau_1$ multiplicity $d/2-1$. For each vertex of $\tau_2$ attach $d-4$ pendant edges. By construction, ${\mathcal O}_d$ has maximum degree $d$ and it has $9d/2-3$ edges and $3d-6$ vertices.
It can be routinely verified that $L({\mathcal O}_d)^2$ is a clique.

For each integer $k\ge 0$, we construct the planar graphs ${\mathcal S}_{k,d}$ and ${\mathcal O}_{k,d}$ as follows.
%Define ${\mathcal T}_{k,d}$ to be a leaf-rooted internally $d$-regular tree of height $k$ (so this is a single vertex if $k=0$, a single edge if $k=1$, and a $d$-pointed star if $k=2$).
Root $d-2$ copies of ${\mathcal T}_{k,d}$ at the centre of each multiple edge (so identifying their roots), and root $d-1$ copies of ${\mathcal T}_{k,d}$ at each pendant vertex.
Note that ${\mathcal S}_{0,d}$ coincides with Wegner's construction~\cite{Weg77}.

%Note ${\mathcal T}_k$ has $\sum_{i=0}^{k-1}(d-1)^i=\frac{1}{d-2}((d-1)^k-1)$ edges and the same number of non-root vertices.
Some arithmetic shows that ${\mathcal S}_{k,d}$ has $(3d/2)\cdot((d-1)^k+1)$
%\begin{align*}
%3d/2+3d/2\cdot(1+(d-2)\frac{1}{d-2}((d-1)^k-1))\\
%\end{align*}
edges and $(3d/2)\cdot(d-1)^k+3$
%\begin{align*}
%3+3d/2\cdot(1+(d-2)\frac{1}{d-2}((d-1)^k-1))\\
%\end{align*}
vertices, and that ${\mathcal O}_{k,d}$ has $(9d/2-15)\cdot((d-1)^k-1)+6(d-1)$
%\begin{align*}
%9d/2-3+3(d/2-1)+(3(d/2-1)+3(d-4))\cdot((d-2)\frac{1}{d-2}((d-1)^k-1))\\
%\end{align*}
edges and $(9d/2-15)\cdot((d-1)^k-1)+9d/2-9$
%\begin{align*}
%3d-6+3(d/2-1)+(3(d/2-1)+3(d-4))\cdot((d-2)\frac{1}{d-2}((d-1)^k-1))\\
%\end{align*}
vertices.

By construction, the following assertions can be deduced directly from the distance properties of ${\mathcal S}_d$, ${\mathcal O}_d$ and ${\mathcal T}_{k,d}$.
Both ${{\mathcal S}_{k,d}}^{2k+2}$ and ${{\mathcal O}_{k,d}}^{2k+3}$ are cliques.
If $k\ge 1$, then $L({\mathcal S}_{k,d})^{2k+1}$ and $L({\mathcal O}_{k,d})^{2k+2}$ are cliques.

By the above, we have that
\[
\limsup_{d\to\infty} \frac{\pi'_t(d)}{\tau'_t(d)} \ge 
\begin{cases}
9/4 & 2\mid t \\
3/2 & 2\nmid t
\end{cases}
\ \ \text{ and }\ \
\limsup_{d\to\infty} \frac{\pi_t(d)}{\tau_t(d)} \ge
\begin{cases}
3/2 & 2\mid t\\
9/4 & 2\nmid t
\end{cases}. \qedhere
\]
\end{proof}

The distance-$t$ edge-colouring cases $t=2$ and $t=3$ are tantalising. By the Four Colour Theorem~\cite{ApHa77,AHK77} and the asymptotic form of Wegner's conjecture~\cite{HHMR08}, we obtain from the above argument that $\pi'_2(d)/\tau'_2(d) \le 3$ and $\limsup_{d\to\infty} \pi'_3(d)/\tau'_3(d) \le 9/2$, respectively. Comparing with the examples ${\mathcal O}_d$ and ${\mathcal S}_{1,d}$ corresponding to lower bounds of $9/4$ and $3/2$, respectively, it would be very interesting to sharpen the gaps.

With respect to the strong chromatic index $\chi'_2$ in particular, such lower bounds were not previously observed as far as we know. Note that the behaviour here contrasts with the general (not necessarily planar) case, where we believe that allowing edges of multiplicity has no effect upon the extremal behaviour of $\chi'_2$~\cite{JKP17+}. Within the class of planar multigraphs, we posit that the examples ${\mathcal O}_d$ are asymptotically extremal for $\chi'_2$.

\begin{conjecture}\label{conj:strongplanar}
Fix $\eps>0$. For every planar multigraph $G$ of maximum degree $d$, the strong chromatic index $\chi'_2(G)$ of $G$ satisfies $\chi'_2(G) \le (9/2+\eps)d$, provided $d$ is sufficiently large.
\end{conjecture}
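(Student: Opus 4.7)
The plan is to refine the Shannon--Faudree argument used for the upper bound of Proposition~\ref{prop:planar}. That argument bounds $\chi'_2(G)$ by a sum, over the classes $M_1,\dots,M_k$ of a proper edge coloring of $G$, of the chromatic numbers $\chi(H_i)$ of the contractions $H_i=G/M_i$; Shannon's theorem gives $k\le\lfloor 3d/2\rfloor$ and the Four Color Theorem gives $\chi(H_i)\le 4$, so the total is $6d$. To reach $(9/2+\eps)d$ one would like the average of $\chi(H_i)$ to be at most $3$, and the natural tool for this is Gr\"otzsch's theorem, which gives $\chi(H_i)\le 3$ as soon as $H_i$ is triangle-free.

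Concretely, I would search for a near-Shannon edge coloring of $G$ into $(3/2+\eps)d$ matchings such that, on average, each contraction $H_i$ is triangle-free. A triangle in $H_i$ originates either from a genuine triangle of $G$ disjoint from $M_i$, or from a $4$- or $5$-cycle of $G$ that gets shortened by the contractions of $M_i$-edges. Starting from a Shannon-optimal coloring, local Kempe-type swaps along Vizing fans should reroute each offending matching edge out of such short cycles, using $o(d)$ extra colors as slack; alternatively, a random-proper-edge-coloring argument could be used to show that the expected number of triangles in the contractions is negligible. Multi-edges would be handled by a preliminary decomposition of $G$ into a heavy part $G_H$, consisting of triangular faces carrying many parallel edges and structurally close to the extremal configurations ${\mathcal S}_d$ and ${\mathcal O}_d$, and a light part $G_L$, which is essentially a simple planar graph; $G_H$ is then colored directly using the local extremal analysis underlying Proposition~\ref{prop:strongcliqueplanar}, and $G_L$ is colored via the Gr\"otzsch-enabled strategy above on a disjoint palette.

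The main obstacle I expect is the global enforcement of triangle-freeness across all contractions simultaneously: a local Kempe adjustment that eliminates a triangle in one $H_i$ may create new triangles in some $H_j$, and any two parallel edges between a common pair of vertices automatically contribute paths that become triangles after contraction. A backup plan is to weaken the triangle-freeness requirement to a sparsity condition on $H_i$ (for example, bounded maximum average degree) and use a $3$-colorability theorem for sparse planar graphs in place of Gr\"otzsch. In either incarnation, the tight constant $9/2$ strongly suggests that a successful proof will need a discharging argument, tailored to the faces and parallel classes of $G$, to demonstrate that the only local obstructions to a $(9/2+\eps)d$-coloring are the configurations ${\mathcal S}_d$ and ${\mathcal O}_d$ themselves.
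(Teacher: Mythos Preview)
The statement you are attempting to prove is Conjecture~\ref{conj:strongplanar}, which the paper explicitly leaves \emph{open}. There is no proof in the paper to compare against; the authors only offer Proposition~\ref{prop:strongcliqueplanar} as supporting evidence, bounding the size of a strong clique (i.e.~$|E(G)|$ when $L(G)^2$ is complete) by $9d/2$ rather than bounding $\chi'_2(G)$ itself.

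As for your proposal, it is a research outline rather than a proof, and the gap you yourself flag is genuine and serious. The crux is the claim that a near-Shannon edge-colouring can be chosen so that the contracted graphs $H_i$ are, on average, triangle-free. For the extremal example ${\mathcal O}_d$ this is already problematic: the underlying octahedron has eight triangles, while a typical colour class in a $\lfloor 3d/2\rfloor$-colouring of ${\mathcal O}_d$ contains only about three edges, so most classes leave several triangles uncontracted in $H_i$. Kempe swaps or random colourings do not obviously help here, because destroying one triangle by rerouting a matching edge into a short cycle tends to create another, and the parallel edges in $\tau_1$ force many $4$-cycles that become triangles after contraction regardless. Your backup suggestions (a sparsity threshold on $H_i$ in place of triangle-freeness, or a discharging argument tailored to the ${\mathcal S}_d/{\mathcal O}_d$ obstructions) are reasonable directions to explore, but neither is currently known to yield $3$-colourability of the $H_i$ in this setting. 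In short, what you have written is an honest sketch of where one might look, not a proof; this is consistent with the statement's status as a conjecture.
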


\noindent
This problem concerns sufficiently large $d$. We point out though that recently the smallest non-trivial case $d=3$ was resolved, with no difference in extremal behaviour in comparison with simple planar graphs~\cite{KLRSWY16}.

In support of Conjecture~\ref{conj:strongplanar}, we show that, within the class of planar multigraphs (and, in fact, within the slightly more general class of multigraphs whose underlying simple graph is $K_5$-minor-free), the examples ${\mathcal O}_d$ are nearly extremal for a parameter (akin to diameter) that is closely related to but weaker than $\chi'_2$.

\begin{proposition}\label{prop:strongcliqueplanar}
Every planar multigraph $G$ of maximum degree $d$ such that $L(G)^2$ is a clique satisfies $|E(G)|\le 9d/2$.
\end{proposition}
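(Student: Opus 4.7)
The plan is to peel off pendant (degree-$1$) vertices of $G$ and then exploit the $L(G)^2$-clique hypothesis to show that the remaining pendants cluster on a very small set. First I would let $V_1$ denote the pendant vertices of $G$ and set $G^{\circ} = G - V_1$, so that $|E(G)| = |E(G^{\circ})| + |V_1|$. I would observe that $L(G^{\circ})^2$ is also a clique, because any length-two path in $L(G)$ between two edges of $G^{\circ}$ that passes through a pendant edge would force those edges to meet at a common vertex of $G^{\circ}$. Next, defining $P \subseteq V(G^{\circ})$ to be the set of non-pendant vertices that carry at least one pendant edge, the key claim is that $P$ induces a clique in the underlying simple graph of $G$: for distinct $v,w \in P$ with pendants $p,q$, the $L(G)^2$-clique condition applied to $vp$ and $wq$ forces $vw \in E(G)$. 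Since the underlying simple graph is planar (and hence $K_5$-subgraph-free), one concludes $|P| \le 4$.

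Setting $Q = V(G^{\circ}) \setminus P$, every pendant attaches at some $v \in P$ and satisfies $\deg_G(v) \le d$, so the pendant count is at most $d|P| - \sum_{v \in P} \deg_{G^{\circ}}(v)$. Decomposing $E(G^{\circ}) = E(G^{\circ}[P]) \sqcup E(G^{\circ}[P,Q]) \sqcup E(G^{\circ}[Q])$ and rearranging yields
\[
|E(G)| \le d|P| + |E(G^{\circ}[Q])| - |E(G^{\circ}[P])|.
\]
Since $P$ is a clique, $|E(G^{\circ}[P])| \ge \binom{|P|}{2}$. Moreover, applying the $L(G)^2$-clique condition to any $Q$-edge $e$ together with a pendant at each $v \in P$ shows that every $v \in P$ must be adjacent (in the underlying simple graph) to some endpoint of $e$. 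For $|P| \ge 3$, this covering obligation combined with planarity would force $|E(G^{\circ}[Q])|$ to be sparse; a short case analysis over $|P| \in \{0,1,2,3,4\}$ should then yield the desired bound $9d/2$.

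The hard part is expected to be the case $|P| = 0$, in which $G$ has no pendants at all and the pendant-based reasoning provides no leverage. In this regime, I would directly bound $|E(G)|$ for a pendant-free planar multigraph whose $L(G)^2$ is a clique, leveraging that the underlying simple graph has diameter at most $3$ (a direct consequence of $L(G)^2$-clique, obtained by joining any two vertices via incident edges and the edge witnessing their closeness) together with the planar bound $|E(G')| \le 3|V(G')|-6$ on the underlying simple graph and the degree cap $d$ on multiplicities. The extremal example $\mathcal{O}_d$ sits in the $|P|=3$ regime, with the triangle $\tau_2$ playing the role of $P$ and $\tau_1$ the role of $Q$: this guides how to balance the terms in the counting and confirms that the bound $9d/2$ is essentially tight.
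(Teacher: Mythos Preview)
Your approach diverges substantially from the paper's, and as written it has a genuine gap.

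The paper never looks at pendants. Instead it observes that if $M$ is any matching in the underlying simple graph $\hat G$, then the $L(G)^2$-clique condition forces an edge between the endpoints of every pair of matching edges, so contracting $M$ yields a $|M|$-clique-minor; planarity then gives matching number at most $4$. Plugging this into the Tutte--Berge formula produces a set $U$ and (at most one) nontrivial component $A$ of $\hat G-U$ with $|U|+|V(A)|/2\le 9/2$, and then $|E(G)|\le |U|\,d + (|V(A)|/2)\,d \le 9d/2$. This handles all cases uniformly, with no split according to whether pendants exist.

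Your pendant decomposition is correct as far as it goes: the argument that $P$ induces a simple clique (hence $|P|\le 4$) is valid, and the identity $|E(G)|\le d|P|+|E(G^{\circ}[Q])|-|E(G^{\circ}[P])|$ is right. But the work you defer to ``a short case analysis'' is exactly where the content lies. For each value of $|P|$ you still need to bound $|E(G^{\circ}[Q])|$ by something of order $d$, and you have not done so. Most seriously, in the case $|P|=0$ you are back to the full statement (now with the extra hypothesis $\delta(G)\ge 2$), and your proposed tools---diameter at most $3$ in $\hat G$, together with $|E(\hat G)|\le 3|V(\hat G)|-6$ and the degree cap---are not enough. Planar graphs of diameter at most $3$ and minimum degree at least $2$ can have arbitrarily many vertices (think of two hubs joined by many internally disjoint paths of length $3$), so neither $|V|$ nor $|E|$ is controlled without a further idea. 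What is actually needed is precisely the matching-number bound: the $L(G)^2$-clique hypothesis gives matching number $\le 4$, which (unlike the diameter bound) really does control the edge count via Tutte--Berge. Once you have that, the pendant decomposition becomes redundant.
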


\begin{proof}
Let $G$ be a planar multigraph of maximum degree $d$ such that $L(G)^2$ is a clique.
Let $\hat{G}$ denote the underlying planar simple graph of $G$ and note that $L(\hat{G})^2$ is also a clique.
For any two edges in a matching there must be at least one edge of $\hat{G}$ between their endpoints. Letting $k$ be the number of edges in a largest matching in $\hat{G}$, then by contraction there must be a $k$-clique-minor, and so $k\le 4$ by the planarity of $\hat{G}$.

By the Tutte--Berge Formula (cf.~e.g.~\cite[Cor.~16.12]{BoMu08}), if $\odd(H)$ denotes the number of connected components in $H$ with an odd number of vertices,
\begin{align}\label{eqn:bergetutte}
k =\frac12\min_{U \subseteq V(\hat{G})}(|U|-\odd(\hat{G}-U)+|V(\hat{G})|).
\end{align}

Fix some $U \subseteq V(\hat{G})=V(G)$ achieving the minimum in~\eqref{eqn:bergetutte}.
Since $L(\hat{G})^2$ is a clique, $\hat{G}-U$ has at most one component $A$ with any edges. 
If $\hat{G}-U$ has no such component, then $|U|=k \le 4$.
Otherwise, if $|V(A)|$ is even, then the odd components are all singletons and~\eqref{eqn:bergetutte} becomes
\begin{align*}
%2k=|U|-\odd(\hat{G}-U)+|V(\hat{G})|
k
&=\frac12(|U|-\odd(\hat{G}-U)+(|U|+\odd(\hat{G}-U)+|V(A)|))
=|U|+\frac12|V(A)|.
\end{align*}
If instead $|V(A)|$ is odd, then there is no even component and the other odd components are all singletons, so~\eqref{eqn:bergetutte} becomes
\begin{align*}
%2k=|U|-\odd(\hat{G}_C-U)+|V(\hat{G}_C)|
k
%&=\frac12(|U|-\odd(\hat{G}_C-U)+(|U|+\odd(\hat{G}_C-U)-1+|V(A)|))
=|U|+\frac12(|V(A)|-1).
\end{align*}
In either case, $|U|+|V(A)|/2 \le k+1/2 \le 9/2$.

Since $G$ has maximum degree $d$, at most $|U|\cdot d$ edges in $G$ are incident to the vertices of $U$. There are at most $(|V(A)|/2)\cdot d$ remaining edges in $G$ (when $\hat{G}-U$ has some non-singleton component $A$), since those edges span $V(A)$. Then the above estimate implies there are at most $9d/2$ edges.
\end{proof}

\section{High-girth planar graphs}\label{sec:girth}

In this section, we prove our main result. We give the arguments in full for distance edge-colouring and later sketch how it can be adapted to distance vertex-colouring. Note that the argument that is used also applies for $t=1$, but Vizing's result~\cite{Viz65} is much stronger in that case.

\begin{theorem}\label{thm:girthedge}
Fix $t\ge 3$ odd. If $G$ is a planar graph of maximum degree $d\ge 4$ and girth at least $6(t^2+2t-1)$, then $\chi'_t(G)\le \tau'_t(d)$.
%\begin{align*}
%\chi'_t(G) \le d\cdot \frac{(d-1)^{(t+1)/2}-1}{d-2}.
%\end{align*}
\end{theorem}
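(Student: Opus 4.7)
The approach is a minimum-counterexample argument followed by a light-edge reduction. Suppose toward contradiction the theorem fails and let $G$ be a counterexample with $|E(G)|$ minimum: $G$ is planar, has maximum degree at most $d$, girth at least $g:=6(t^2+2t-1)$, and $\chi'_t(G) > \tau'_t(d)$. Since every tree of maximum degree $d$ satisfies $\chi'_t \le \tau'_t(d)$ by Proposition~\ref{prop:tree}, $G$ must contain a cycle.

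The structural heart of the argument is the claim that $G$ contains a path $v_0 v_1 \cdots v_{\ell}$ of length $\ell \geq 2t+1$ whose internal vertices $v_1,\ldots,v_{\ell-1}$ all have degree exactly $2$ in $G$. To obtain one, first contract every maximal degree-$2$ path of $G$ to a single edge, producing a planar multigraph $\tilde H$. Then iteratively delete vertices of degree at most $1$ in $\tilde H$ to obtain a planar multigraph $\tilde H^{\ast}$ of minimum degree at least $3$. If $\tilde H^{\ast}$ were empty, then $\tilde H$ would be a forest, and therefore so would $G$, contradicting that $G$ has a cycle. Hence $\tilde H^{\ast} \neq \emptyset$, and Euler's formula for planar multigraphs forces $\tilde H^{\ast}$ to contain a loop, a pair of parallel edges, or a simple cycle of length at most $5$. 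Decontracting, this yields a cycle of $G$ of length at most $5\ell$, where $\ell$ is the maximum length of a degree-$2$ path in $G$; note that the internal vertices of these paths genuinely have degree $2$ in $G$ by the definition of the contraction. The girth hypothesis therefore gives $\ell \geq g/5 \gg 2t+1$.

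Given such a path, pick the middle edge $e = v_i v_{i+1}$ with $t \leq i \leq \ell - t - 1$. Since $v_{i-t},\ldots,v_{i+t+1}$ all have degree $2$ in $G$, the set $N_t(e)$ consists solely of the at most $2t$ other path-edges within line-graph distance $t$ of $e$. A routine computation gives $\tau'_t(d) > 2t$ whenever $d \geq 4$ and $t \geq 3$ (already $\tau'_3(4) = 16$, and the gap grows rapidly). By minimality, $G-e$ admits a distance-$t$ edge-colouring using at most $\tau'_t(d)$ colours, which we extend to $e$ by choosing a colour not among the at most $2t$ forbidden ones, contradicting the choice of $G$.

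The main obstacle is really the structural step: one must verify that the short cycle extracted in $\tilde H^{\ast}$ truly lifts through both the pendant-removal and the decontraction stages to a degree-$2$ path in $G$ whose internal vertices are genuinely of $G$-degree $2$. The stated girth constant $6(t^2+2t-1)$ is considerably stronger than what this crude Euler-based extraction needs (something like $10t + 5$ already suffices), so I expect the paper's value to arise from a single, uniform discharging pass — very possibly engineered to apply simultaneously to the distance vertex-colouring analogue (Theorem~\ref{thm:girthvertex}) — rather than from an attempt at optimality.
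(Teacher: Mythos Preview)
Your argument has a real gap at the structural step. The claim that $\tilde H^{\ast}$ has minimum degree at least $3$ is not justified: after contracting degree-$2$ paths, $\tilde H$ indeed has no vertices of degree exactly $2$, but when you then delete a leaf of $\tilde H$, its (formerly degree-$\ge 3$) neighbour may drop to degree $2$, and your deletion rule (``degree at most $1$'') leaves it there. For a concrete obstruction, take a long cycle and attach two pendant edges at every cycle vertex: this $G$ has maximum degree $4$ and no degree-$2$ vertices at all, so $\tilde H = G$; deleting the leaves yields $\tilde H^{\ast}$ equal to the bare cycle, of minimum degree $2$, and Euler gives you no short cycle. Your downstream conclusion (``there is a degree-$2$ path of length $\ge g/5$'') then fails outright, since this $G$ has no degree-$2$ path of any positive length.

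The natural repair --- alternate ``contract degree-$2$ paths'' with ``delete degree-$\le 1$ vertices'' until stable --- does produce minimum degree $\ge 3$, but now an edge of the final graph no longer lifts to a path of \emph{genuine} degree-$2$ vertices of $G$: it lifts to a path through vertices that became degree $2$ only after pendant subtrees were stripped off. For the middle edge $e$ of such a path, $|N_t(e)|$ can far exceed $2t$, because those pendant subtrees contribute many edges to the distance-$t$ neighbourhood, and your extension step fails. This is precisely the obstacle the paper's proof is built around: it first shows via minimality that every pendant (``short'') tree has height at most $(t-1)/2$, then defines \emph{weak} vertices ($k$-vertices adjacent to exactly $k-2$ short trees), and proves by a case analysis on the heights of the attached trees that no path of weak vertices can contain more than $t^2+2t-2$ vertices. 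It is this quadratic bound that drives the girth constant $6(t^2+2t-1)$; your conjecture that girth of order $t$ should suffice rests on the flawed reduction. (The paper does obtain a linear-in-$t$ girth in Theorem~\ref{thm:girthedgeimproved}, but only by strengthening the degree hypothesis to $d \ge t+2$.)
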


\begin{proof}
For a contradiction, assume that $G$ is a counterexample that minimises $|V(G)|+|E(G)|$. More specifically, let $G$ be a plane graph (i.e.~with an embedding in the plane) of maximum degree $d\ge 4$ and girth at least $6(t^2+2t-1)$ for which $\chi'_t(G)> \tau'_t(d)$, but $\chi'_t(G') \le \tau'_t(d)$ for any proper subgraph $G'$ of $G$. Since $G$ is minimal, it must be connected.

For $k\ge 2$, we say that a $k$-vertex $v$ of $G$ is {\em adjacent to a short tree}, if there exists a vertex $u\in N(v)$ such that when we remove the edge $uv$, $u$ is disconnected from $v$ and the maximal subgraph containing $u$ is a tree. We call this subgraph, including the edge $uv$, a {\em short tree}, and call $v$ its {\em root} and $u$ its {\em stem}. 
Before continuing, let us justify our use of the adjective `short'.

\begin{claim}\label{clm:shorttree}
Any short tree has height at most $(t-1)/2$.
\end{claim}

\begin{claimproof}
Suppose that $G$ contains a vertex $v$ adjacent to a short tree with height $s\ge (t+1)/2$. Let $e$ be a (leaf) edge at distance $s-1$ from $v$. 
By a breadth-first count rooted at a vertex at distance $(t-1)/2$ along the path between $e$ and $v$, we see that 
\begin{align*}
|N_t(e)| 
& \le d+d(d-1)+d(d-1)^2+\dots+d(d-1)^{(t-1)/2} - 1
 = \tau'_t(d) - 1.
\end{align*}
By minimality, $\chi'_t(G-\{e\}) \le \tau'_t(d)$ and so there is a corresponding distance-$t$ colouring of $G-\{e\}$. To extend this to $G$, for the edge $e$ we only need to choose a colour that does not appear on $N_t(e)$, which is possible by the above estimate. This is a contradiction.
\end{claimproof}

\noindent
Note that there are at most $\iota_{s,d}$ edges in a short tree of height $s (\le (t-1)/2)$.

For $k\ge 2$ we say that a $k$-vertex is {\em weak} if it is adjacent to exactly $k-2$ short trees.
We now give two structural properties of $G$.

\begin{claim}\label{clm:girthedge,1}
Each $k$-vertex in $G$ is adjacent to at most $k-1$ short trees.
\end{claim}
\begin{claimproof}
If $G$ contains a $k$-vertex that is adjacent to $k$ short trees, then $G$ is a tree. By Proposition~\ref{prop:tree}, $\chi'_t(G)\le \tau'_t(d)$, a contradiction.
\end{claimproof}

\begin{claim}\label{clm:girthedge,2}
Any path in $G$ of weak vertices contains at most $t^2+2t-2$ vertices.
\end{claim}

\begin{claimproof}
Suppose to the contrary that $G$ contains a path $v_1\cdots v_{t^2+2t-1}$ of $t^2+2t-1$ weak vertices.
Then one of the following statements must hold: 
\begin{enumerate}
\item\label{case1} $G$ contains a path of $3t-1$ weak vertices all of degree $2$; or 
\item\label{case2} $G$ contains a path $w_1\cdots w_{2t+1}$ of $2t+1$ weak vertices where $w_{t+1}$ is adjacent to a short tree of some height $s\le (t-1)/2$, and any short tree adjacent to a vertex $w_i$, $i\in\{1,\dots, 2t+1\}$, is of height at most $s$.
\end{enumerate}
To see this, suppose that~\ref{case2} is false.
By Claim~\ref{clm:shorttree}, the only possible heights available for a short tree are in $\{1,\dots,(t-1)/2\}$. By the assumption, no short tree of height $(t-1)/2$ is adjacent to any of the vertices $v_{t+1},\dots,v_{t^2+t-1}$. But this then implies, again by the assumption, that no short tree of height $(t-3)/2$ is adjacent to any of the vertices $v_{2t+1},\dots,v_{t^2-1}$. By an induction, it thus follows that no short tree of height $(t+1)/2-k$ is adjacent to any of the vertices $v_{kt+1},\dots,v_{t^2+(2-k)t-1}$, for $k\in\{1,\dots,(t-1)/2\}$. For $k=(t-1)/2$, this implies that the path $v_{t^2/2-t/2+1},\dots,v_{t^2/2+5t/2-1}$ is not adjacent to any (non-trivial) short tree at all, implying that~\ref{case1} is true.

To prove the claim, we examine cases~\ref{case1} and~\ref{case2} separately.

First suppose~\ref{case1} holds, and let $u_1\cdots u_{3t-1}$ be a path of $3t-1$ weak vertices all of degree $2$. Let $H=G-\{u_{t+1}, u_{t+2}, \dots, u_{2t-1}\}$, so it is $G$ with the middle $t-1$ vertices of the path removed. By minimality, $\chi'_t(H)\le \tau'_t(d)$ and there exists a corresponding distance-$t$ edge-colouring of $H$. We can extend this to a distance-$t$ colouring of $G$, by colouring the uncoloured edges greedily in any order. This uses no new colours because $N_t(u_iu_{i+1})= 2t\le \tau'_t(d)-1$ (using the assumption that $d\ge 4$) for all $i\in \{t+1,\dots, 2t-2\}$. So we have reached a contradiction.

Next suppose~\ref{case2} holds. Let $e$ be an edge (in the short tree adjacent to $w_{t+1}$ of height $s$) that is at distance $s-1$ from $w_{t+1}$, and let $H=G-\{e\}$. By minimality, $\chi'_t(H)\le \tau'_t(d)$ and there exists a corresponding distance-$t$ edge-colouring of $H$. If we succeed in showing that $N_t(e)\le \tau'_t(d)-1$, then we can extend to a distance-$t$ edge-colouring of all of $G$, a contradiction, thereby proving Claim~\ref{clm:girthedge,2}.

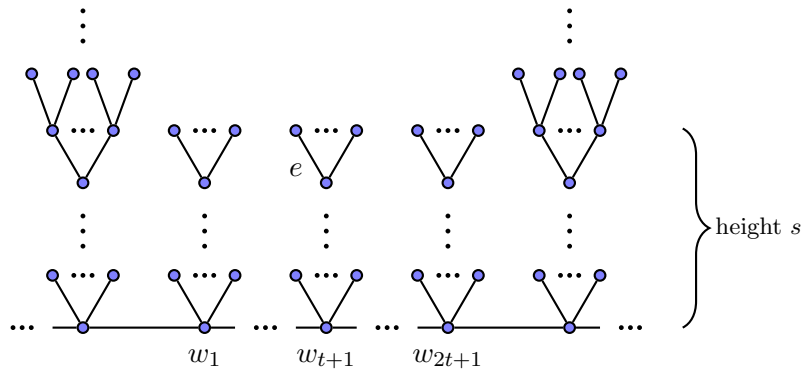
\begin{figure}
\begin{center}
	\hspace*{2cm}{\begin{tikzpicture}[thick,scale=0.8]
	\foreach \x in {-20,-10,0,10,20}{
		\node [node] (\x) at (\x/5,0) {};
	};
	\draw (-4.5,0) -- (-20) -- (-10) -- (-1.5,0);
	\draw (-0.5,0) -- (0) -- (0.5,0);
	\draw (1.5,0) -- (10) -- (20) -- (4.5,0);
	\foreach \y in {-1,1} {
		\foreach \z in {-1,0,1} {
			\node [point] at (\y + 0.15*\z,0) {};
		}
	};
	\foreach \y in {-1,1} {
		\foreach \z in {-1,0,1} {
			\node [point] at (5*\y + 0.15*\z,0) {};
		}
	};	
	\foreach \x in {-20,-10,0,10,20}{
		\foreach \y in {-1,1}{
			\node [node] (\x+\y) at ($ (\x) + (90+30*\y:1) $) {};
			\draw (\x) -- (\x+\y);
		}
	};
	\foreach \y in {-20,-10,0,10,20}{
		\foreach \z in {-1,0,1} {
			\node [point] at ($ (\y) + (0, 1.6 + 0.25*\z) $) {};
		}
	};	
	\foreach \y in {-20,-10,0,10,20}{
		\foreach \z in {-1,0,1} {
			\node [point] at ($ (\y+1) + (0.5+0.15*\z ,0) $) {};
		}
	};
	\foreach \x in {-20,-10,0,10,20}{
		\node [node] (\x+) at (\x/5,2.4) {};
	}		
	\foreach \x in {-20,-10,0,10,20}{
		\foreach \y in {-1,1}{
			\node [node] (\x++\y) at ($ (\x+) + (90+30*\y:1) $) {};
			\draw (\x+) -- (\x++\y);
		}
	};
	\foreach \y in {-20,-10,0,10,20}{
		\foreach \z in {-1,0,1} {
			\node [point] at ($ (\y++1) + (0.5+0.15*\z ,0) $) {};	
		}
	};
	\foreach \x in {-20,20}{
		\foreach \y in {-1,1}{
			\foreach \z in {-1,1}{
				\node [node] (\x+++\z) at ($ (\x++\y) + (90+20*\z:1) $) {};
				\draw (\x++\y) -- (\x+++\z);
			}
		}
	};
	\foreach \y in {-20,20}{
		\foreach \z in {-1,0,1} {
			\node [point] at ($ (\y) + (0, 5 + 0.25*\z) $) {};	
		}
	};		
			
	\draw (-10) node [label={[label distance=0cm]-90:$w_1$}] {};
	\draw (0) node [label={[label distance=0cm]-90:$w_{t+1}$}] {};
	\draw (10) node [label={[label distance=0cm]-90:$w_{2t+1}$}] {};
	\draw(0++1) node [label={[label distance=0.15cm]-90:$e$}] {};
	\draw [decorate,decoration={brace,amplitude=10pt, mirror},xshift=-4pt,yshift=0pt]
	(6,0) -- (6,3.3) node [black,midway,xshift=1cm] 
	{\footnotesize height $s$};

	\end{tikzpicture}}
\end{center}
\caption{A depiction of $N_t(e)$ under~\ref{case2} in Claim~\ref{clm:girthedge,2}.} \label{fig:2.4}
\end{figure}

%Define $\iota_s:=1+(d-1)+\dots+(d-1)^{s-1}=\frac{1}{d-2}((d-1)^s-1)$.
Since the distance-$t$ neighbourhood of $e$ consists only of short trees of height at most $s$ (see Figure~\ref{fig:2.4}), we obtain
\begin{align*}
|N_t(e)|
& \le (d-2)\cdot\iota_{s,d}-1+2(t-2s+1)((d-2)\cdot\iota_{s,d}+1)+2\iota_{s,d}\\
%& = (d-1)^s-2+2(t-2s+1)(d-1)^s+2\iota_{s,d}\\
& = (2t-4s+3)(d-1)^s-2+2\frac{(d-1)^s-1}{d-2}.
\end{align*}
We are done if this final expression is shown to be at most $\tau'_t(d)-1$.
After an arithmetic rearrangement, our aim then is to show the following inequality:
\begin{align}\label{eqn:2.2}
(2t-4s+3)(d-2)+2\le d(d-1)^{(t+1)/2-s}.
\end{align}
To show this, first consider the boundary case $s=(t-1)/2$:~\eqref{eqn:2.2} becomes
\begin{align*}
5d-8\le d(d-1),
\end{align*}
which holds true if $d\ge 4$. If we decrease the value of $s$ from $(t-1)/2$ to $(t-1)/2-k$ for some $1\le k\le ((t-1)/2)-1$, then in~\eqref{eqn:2.2} the left-hand side increases additively by the value $4k(d-2)$, while the right-hand side increases by a multiplicative factor of $(d-1)^k$. Initially the right-hand side was at least $12$ (because $d\ge 4$), so~\eqref{eqn:2.2} remains valid as we decrease $s$ from $(t-1)/2$ to $1$. This confirms~\eqref{eqn:2.2} and concludes the proof.
\end{claimproof}

The two structural properties established in Claims~\ref{clm:girthedge,1} and~\ref{clm:girthedge,2} are enough to complete the proof.
From $G$ we construct another plane graph as follows. Delete every vertex having a unique incident face. (After this, every vertex has degree at least $2$ by Claim~\ref{clm:girthedge,1}.) Then replace every maximal path of $2$-vertices by an edge connecting its endpoints, and call the resulting plane graph $G'$. By the girth assumption for $G$ and Claim~\ref{clm:girthedge,2}, $G'$ is a simple graph of minimum degree $3$ and girth at least $6$. Thus $2|E(G')| \ge 3|V(G')|$ and $2|E(G')| \ge 6|F(G')|$. By Euler's Formula,
\[
2= |V(G')|-|E(G')|+|F(G')| \le \frac23|E(G')|-|E(G')|+\frac13|E(G')|=0,
\]
a contradiction.
\end{proof}

Theorem~\ref{thm:girthedge} has an assumption that the maximum degree $d$ is at least $4$ (used for instance in the proof of~\eqref{eqn:2.2}). If $G$ is a planar graph with maximum degree $2$, then every component of $G$ is a path or a cycle. For paths $G$, $\chi'_t(G)\le t+1$, and this bound is met by a path of length $t+1$. On the other hand, $\chi'_t(G) > t+1$ for cycles $G$ of arbitrarily large length $\ell$, provided $\ell \equiv 1 \pmod{k+1}$.
We do not know yet if there is some girth $g'_{t,3}$ such that, if $G$ is a planar graph of maximum degree $3$ and girth at least $g'_{t,3}$, then $\chi'_t(G) \le \tau'_t(3) = 3\cdot 2^{(t+1)/2}-3$.

\subsection{Tradeoff between girth and degree}\label{sub:girthedgeimproved}

In this subsection, we briefly sketch how the girth condition in Theorem~\ref{thm:girthedge} can be reduced by a factor of $t$ at the expense of a lower bound condition of order $t$ on the maximum degree.

\begin{theorem}\label{thm:girthedgeimproved}
Fix $t\ge 3$ odd. If $G$ is a planar graph of maximum degree $d\ge t+2$ and girth at least $30t-6$, then $\chi'_t(G)\le \tau'_t(d)$.
%\begin{align*}
%\chi'_t(G) \le d\cdot \frac{(d-1)^{(t+1)/2}-1}{d-2}.
%\end{align*}
\end{theorem}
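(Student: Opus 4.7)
The plan is to mirror the proof of Theorem~\ref{thm:girthedge} but tighten the bound on the length of weak-vertex paths from $t^2+2t-2$ to $5t-2$, exploiting the stronger hypothesis $d\ge t+2$ to absorb the resulting arithmetic looseness. As before, I would take a minimum-order plane counterexample $G$ of maximum degree $d\ge t+2$ and girth at least $30t-6$. Claims~\ref{clm:shorttree} and~\ref{clm:girthedge,1} transfer verbatim, since their proofs depend only on the definition of short trees and the minimality of $G$: short trees still have height at most $(t-1)/2$, and every $k$-vertex is adjacent to at most $k-1$ short trees. Once every weak-vertex path of $G$ is shown to have at most $5t-2$ vertices, the Euler's-formula argument concluding the original proof reruns directly: the contracted plane graph $G'$ has minimum degree at least $3$, and each of its edges corresponds to a path of at most $5t-1$ edges in $G$, so the assumption girth$(G)\ge 30t-6=6(5t-1)$ forces girth$(G')\ge 6$, contradicting Euler's formula.

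The substantive task is therefore to prove the strengthened variant of Claim~\ref{clm:girthedge,2}: no path of weak vertices in $G$ has more than $5t-2$ vertices. I would retain the Case~\ref{case1}/Case~\ref{case2} dichotomy of the original proof. Case~\ref{case1} (a path of $3t-1$ consecutive weak $2$-vertices) transfers without change, as the bound $2t\le \tau'_t(d)-1$ is trivially satisfied for $d\ge t+2$. For Case~\ref{case2}, I would choose $s\in\{1,\dots,(t-1)/2\}$ to be the maximum short-tree height attached to a suitable interior sub-path of the hypothesised path of $5t-1$ weak vertices, then fix a vertex $w_i$ in the middle block attaining this maximum, together with a leaf edge $e$ of its short tree at depth $s-1$. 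The key structural point is that with $5t-1$ weak vertices on the path, $w_i$ can always be chosen sufficiently deep inside the path that the distance-$t$ neighbourhood $N_t(e)$ is confined to the $(t-s+1)$-hop segment of the path around $w_i$ and the short trees rooted there. Crucially, the long height-stripping iteration of the original proof is no longer available, so short trees of every height up to $(t-1)/2$ may appear along that segment.

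The main obstacle is the arithmetic verification of an inequality analogous to~\eqref{eqn:2.2}, now in this more permissive setting. The resulting inequality takes the form $L(t,s,d)\le d(d-1)^{(t+1)/2-s}$, where $L$ is a polynomial in $d$ of degree one whose coefficients are linear in $t$ (reflecting the fact that we allow all nearby weak vertices to carry short trees of arbitrary admissible height). The hypothesis $d\ge t+2$ makes the right-hand side grow at least like $(t+2)(t+1)^{(t+1)/2-s}$, which dominates $L$ across the whole range of $s$. As in the original proof, the tightest instance is the boundary $s=(t-1)/2$, where the inequality reduces to a low-degree polynomial inequality in $d$ verifiable under $d\ge t+2$; decreasing $s$ multiplies the right-hand side by $(d-1)$ at the expense of only a bounded additive correction on the left, so the boundary check propagates to all smaller values of $s$.
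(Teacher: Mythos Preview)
Your high-level strategy matches the paper: shorten the weak-vertex path bound to $5t-2$, keep Case~\ref{case1} unchanged, and rerun the Euler-formula endgame with girth $6(5t-1)=30t-6$. Where you diverge from the paper is in the treatment of Case~\ref{case2}, and here your description contains a genuine gap.

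The paper does not try to preserve the parameter $s$ as a maximum over any sub-path. Instead it \emph{drops} the height constraint on neighbouring short trees entirely, replacing~\ref{case2} by the weaker statement $(ii')$: there is a path $w_1\cdots w_{2t+1}$ of weak vertices with $w_{t+1}$ adjacent to some short tree. This is immediate from a path of $5t-1$ weak vertices (if the middle $3t-1$ are all $2$-vertices, Case~\ref{case1} holds; otherwise centre at any interior vertex carrying a short tree). The count of $|N_t(e)|$ is then carried out allowing \emph{every} short tree along the segment to have the maximal height $(t-1)/2$, and this yields the single inequality $(t+3)(d-2)+2\le d(d-1)$, which factors as $(d-2)(d-(t+2))\ge 0$ and gives exactly the hypothesis $d\ge t+2$.

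Your version retains $s$ as a tracked parameter and asserts that the resulting inequality has the shape $L(t,s,d)\le d(d-1)^{(t+1)/2-s}$ with $L$ linear in $d$, tightest at $s=(t-1)/2$, and propagating downward because ``decreasing $s$ multiplies the right-hand side by $(d-1)$ at the expense of only a bounded additive correction on the left''. This is where the argument fails. Once you allow the short trees on the segment to have height up to $(t-1)/2$ rather than $\le s$, the dominant contribution to $|N_t(e)|$ is of order $(t+3-2s)(d-1)^{(t-1)/2}$, coming from the roughly $t-2s$ fully-counted tall trees on each side, and this term does \emph{not} scale like $(d-1)^s$. After normalising against $\tau'_t(d)$ the inequality collapses to one with right-hand side $d(d-1)$, independent of $s$; the left-hand side is then \emph{increasing} as $s$ decreases, so the tightest case is $s=1$, not $s=(t-1)/2$, and the ``multiply by $(d-1)$'' propagation step is simply unavailable. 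In short, the inequality you would actually face is (up to lower-order terms) $(t+2)(d-2)+2\le d(d-1)$ at $s=1$, essentially the paper's inequality, and the boundary-plus-propagation scheme you outline does not establish it.
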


\begin{proof}[Proof sketch]
In the proof of Claim~\ref{clm:girthedge,2}, we showed that one of two possibilities occur,~\ref{case1} and~\ref{case2}. In~\ref{case2}, we may drop the requirement that all of the short trees have height at most $s$ as follows:
\begin{itemize}
\item[($ii$')] $G$ contains a path $w_1\cdots w_{2t+1}$ of $2t+1$ weak vertices where $w_{t+1}$ is adjacent to a short tree.
\end{itemize}
To prove that~\ref{case1} or~($ii$') must hold, it suffices to start with a path of $5t-1$ weak vertices.
However, to complete the proof of this revised version of Claim~\ref{clm:girthedge,2}, it follows from the same arguments that the following must be true in order to establish the corresponding version of~\eqref{eqn:2.2}:
\begin{align*}
(t+3)(d-2)+2\le d(d-1).
\end{align*}
This holds if $d\ge t+2$.

%During the discharging phase, t
The reducibility of a path of $5t-1$ weak vertices translates to a girth requirement of $6(5t-1)=30t-6$.
\end{proof}

We naturally wonder if this tradeoff can be pushed even further, possibly to a girth requirement independent of $t$, but we leave this to future study.

\subsection{Distance vertex-colouring}\label{sub:girthvertex}

In this subsection, we describe how to adapt the proof of Theorem~\ref{thm:girthedge} to distance-$t$ vertex-colouring.

\begin{theorem}\label{thm:girthvertex}
Fix $t\ge 2$ even. If $G$ is a planar graph of maximum degree $d\ge 4$ and girth at least $6(t^2+t-2)$, then $\chi_t(G)\le \tau_t(d)$.
%\begin{align*}
%\chi_t(G)\le 1+d\cdot \frac{(d-1)^{t/2}-1}{d-2}.
%\end{align*}
\end{theorem}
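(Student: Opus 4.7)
The plan is to adapt the proof of Theorem~\ref{thm:girthedge} to vertex-colouring with even $t$. I would take a minimal counterexample $G$: a planar graph of maximum degree $d\ge 4$ and girth at least $6(t^2+t-2)$ with $\chi_t(G)>\tau_t(d)$, minimising $|V(G)|+|E(G)|$. By minimality $G$ is connected and every proper subgraph admits a distance-$t$ vertex-colouring with at most $\tau_t(d)$ colours.

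First, I would prove the analogue of Claim~\ref{clm:shorttree}: every short tree has height at most $t/2-1$. If a short tree of height $s\ge t/2$ rooted at $v$ existed, take a leaf $w$ (necessarily of $G$-degree $1$) at distance $s$ from $v$. A breadth-first count rooted at the vertex at distance $t/2$ along the $w$-to-$v$ path gives $|N_t(w)|\le d\cdot\iota_{t/2,d}=\tau_t(d)-1$; so we delete $w$, colour $G-w$ by minimality, and extend by choosing a colour outside $N_t(w)$. Claim~\ref{clm:girthedge,1} then carries over verbatim using Proposition~\ref{prop:tree}, and weak $k$-vertices are defined exactly as in the edge case.

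The analogue of Claim~\ref{clm:girthedge,2} asserts that every path of weak vertices in $G$ has at most $t^2+t-3$ vertices. The case analysis mirrors the edge case: a long enough weak path must contain either (i) a sub-path of weak degree-$2$ vertices from whose interior a block of roughly $t$ consecutive vertices can be removed (each with $|N_t(v)|=2t\le\tau_t(d)-1$ for $d\ge 4$, where the girth hypothesis ensures that any pair of retained vertices at $G$-distance $\le t$ via the deleted middle is joined in $G-(\text{removed})$ only by a path of length far exceeding $t$, so no conflict arises when the colouring is lifted back to $G$), or (ii) a sub-path of roughly $2t+1$ weak vertices each attached to a short tree of uniform maximum height $s\le t/2-1$ with the central one of height exactly $s$, whose deepest leaf is reducible via an arithmetic inequality analogous to~\eqref{eqn:2.2}. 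The concluding structural step is identical to the edge case: suppress maximal paths of $2$-vertices and delete vertices incident to a single face, obtaining a plane graph $G'$ of minimum degree $\ge 3$ and girth $\ge 6$, contradicting Euler's formula.

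The main obstacle is the arithmetic in case (ii). The parity shift to even $t$ and the move from edge-distance to vertex-distance change every index in the bound on $|N_t(w)|$ arising from the deepest leaf of the central short tree and the contributions of the $2t+1$ weak vertices and their short trees. The resulting analogue of~\eqref{eqn:2.2} should be verified at the boundary $s=t/2-1$, where it reduces to a polynomial inequality in $d$ valid for $d\ge 4$, and then propagated downward as $s$ decreases using the additive-versus-multiplicative growth comparison from the edge case. A secondary concern is that vertex-removal in case (i) requires a slightly thicker middle block than edge-removal in the edge-colouring analogue, so one must verify that the path-of-weak-vertices induction still delivers the claimed bound $t^2+t-3$.
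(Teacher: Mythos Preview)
Your proposal is correct and follows essentially the same route as the paper's own proof sketch: minimal counterexample, short trees of height at most $t/2-1$, the weak-path bound $t^2+t-3$ via the two-case dichotomy, and the final contraction to a simple planar graph of minimum degree $3$ and girth at least $6$ contradicting Euler's formula. Two small remarks: your girth observation in case~(i) is harmless but unnecessary, since removing exactly $t$ consecutive interior vertices from the $(3t-2)$-path leaves the nearest retained vertices at $G$-distance $t+1$ via that path, so no cross-gap conflict can arise; and your ``thicker middle block'' concern is slightly off --- the vertex case actually needs only $3t-2$ degree-$2$ weak vertices (one fewer than the $3t-1$ in the edge case), because there is one fewer admissible short-tree height ($1,\dots,t/2-1$ rather than $1,\dots,(t-1)/2$), which is precisely why the induction lands on $t^2+t-3$.
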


\begin{proof}[Proof sketch]
Assume that $G$ is a counterexample that minimises $|V(G)|$. More specifically, let $G$ be a plane graph of maximum degree $d\ge 4$ and girth at least $6(t^2+t-2)$ for which $\chi_t(G)> \tau_t(d)$, but $\chi_t(G') \le \tau_t(d)$ for any proper induced subgraph $G'$ of $G$. Since $G$ is minimal, it must be connected.
We use the same notation as in the proof of Theorem~\ref{thm:girthedge}.

\begin{claim}\label{clm:shorttree,vertex}
Any short tree has height at most $t/2-1$.
\end{claim}

\begin{claimproof}
Suppose that $G$ contains a vertex $v$ adjacent to a short tree with height $s\ge t/2$. Let $u$ be a leaf vertex (so a vertex that lies at distance $s$ from $v$). Then 
\begin{align*}
|N_t(u)| 
& \le 1+d+d(d-1)+d(d-1)^2+\dots+d(d-1)^{t/2-1} - 1\\
& = \tau_t(d) - 1.
\end{align*}
By minimality, $\chi_t(G-\{u\}) \le \tau_t(d)$ and so there is a corresponding distance-$t$ colouring of $G-\{u\}$. To extend this to $G$, for the edge $u$ we only need to choose a colour that does not appear on $N_t(u)$, which is possible by the above estimate. This is a contradiction.
\end{claimproof}

\begin{claim}\label{clm:girthvertex,1}
Each $k$-vertex in $G$ is adjacent to at most $k-1$ short trees.
\end{claim}
\begin{claimproof}
If $G$ contains a $k$-vertex that is adjacent to $k$ short trees, then $G$ is a tree. By Proposition~\ref{prop:tree}, $\chi_t(G)\le \tau_t(d)$, a contradiction.
\end{claimproof}

\begin{claim}\label{clm:girthvertex,2}
Any path in $G$ of weak vertices contains at most $t^2+t-3$ vertices.
\end{claim}

\begin{claimproof}
Suppose to the contrary that $G$ contains a path $v_1\cdots v_{t^2+t-2}$ of $t^2+t-2$ weak vertices.
Then one of the following statements must hold: 
\begin{enumerate}
\item\label{case1,vertex} $G$ contains a path of $3t-2$ weak vertices all of degree $2$; or 
\item\label{case2,vertex} $G$ contains a path $w_1\cdots w_{2t+1}$ of $2t+1$ weak vertices where $w_{t+1}$ is adjacent to a short tree of height $s$, and any short tree adjacent to a vertex $w_i$, $i\in\{1,\dots, 2t+1\}$, is of height at most $s$.
\end{enumerate}
%\noindent
To see this, suppose that~\ref{case2,vertex} is false.
%We assumed $G$ contains a path of at least $t^2+2t-1$ weak vertices, which we named $v_1, v_2\dots, v_{t^2+2t-1}$.
By Claim~\ref{clm:shorttree,vertex}, the only possible heights available for a short tree are in $\{1,\dots,t/2-1\}$. By the assumption, no short tree of height $t/2-1$ is adjacent to any of the vertices $v_{t+1},\dots,v_{t^2-2}$. But this then implies, again by the assumption, that no short tree of height $t/2-2$ is adjacent to any of the vertices $v_{2t+1},\dots,v_{t^2-t-2}$. By an induction, it thus follows that no short tree of height $t/2-k$ is adjacent to any of the vertices $v_{kt+1},\dots,v_{t^2+(1-k)t-2}$, for $k\in\{1,\dots,t/2-1\}$. For $k=t/2-1$, this implies that the path $v_{t^2/2-t+1},\dots,v_{t^2/2+2t-2}$ is not adjacent to any (non-trivial) short tree at all, implying that~\ref{case1,vertex} is true.

The rest of the proof is nearly identical to the proof of Claim~\ref{clm:girthedge,2}.
\end{claimproof}
%The discharging phase is nearly identical to the one in 
The rest is the same as in the proof of Theorem~\ref{thm:girthedge}.
\end{proof}

\subsection{The ``other'' cases}\label{sub:other}

Here we give some examples certifying that it is impossible to extend Theorems~\ref{thm:girthedge} and~\ref{thm:girthvertex} to the other parities of $t$.

\begin{proposition}\label{prop:other}
For even $t\ge 2$, there exists a planar graph $G$ of maximum degree $d$ and girth at least $(\tau'_t(d)-1)/2$ such that $\chi'_t(G) > \tau'_t(d)$, provided $d$ is large enough.
For odd $t\ge 1$ and $d\ge 3$, there exists a planar graph $G$ of maximum degree $d$ and arbitrarily large girth such that $\chi_t(G) > \tau_t(d)$.
\end{proposition}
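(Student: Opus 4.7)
The plan is to exhibit explicit planar graph counterexamples in each case. In both, the construction is a long odd cycle with appropriate rooted trees attached at each cycle vertex, and the obstruction to the tree bound comes from a cycle-parity argument.

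\emph{Even $t$ (distance edge-colouring).} For $d$ large, let $n$ be the smallest odd integer with $n \ge (\tau'_t(d)-1)/2$. Form $G$ by taking the cycle $C_n$ and rooting $d-2$ copies of $\mathcal{T}_{t/2,d}$ at each cycle vertex, identifying the root-leaf of each copy with that vertex. Then $G$ is planar, has maximum degree $d$, and girth $n \ge (\tau'_t(d)-1)/2$. To show $\chi'_t(G) > \tau'_t(d)$: assume a distance-$t$ edge colouring with at most $\tau'_t(d)$ colours, let $A_i$ denote the set of colours on the $d$ edges incident to cycle vertex $v_i$, and use a tree-extremal count at each consecutive pair $(v_i,v_{i+1})$ to force $A_i \cup A_{i+1}$ together with the colours on the nearby depth-$(t/2)$ subtree edges to exhaust most of the palette (by a local analogue of the count giving $\tau'_t(d)$). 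Propagating the resulting forced relations on $(A_1,\ldots,A_n)$ around the cycle yields a system that admits no consistent solution when $n$ is odd.

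\emph{Odd $t$ (distance vertex-colouring).} Given any girth bound $g$, pick $n$ odd with $n \ge g$ and $\chi(C_n^t) \ge t+2$ (for instance any odd $n \ge g$ with $n \not\equiv 0 \pmod{t+1}$). Form $G$ from $C_n$ by attaching $d-2$ copies of $\mathcal{T}_{(t-1)/2,d}$ at each cycle vertex (for $t=1$, just $d-2$ pendant edges). Then $G$ is planar, has maximum degree $d$, and girth $n$, which may be taken arbitrarily large. The argument for $\chi_t(G) > \tau_t(d)$ mirrors the edge case: a hypothetical $\tau_t(d)$-colouring would, by a tree-extremal count at each consecutive pair of cycle vertices, force cyclically incompatible relations on the local colour sets, and the odd parity of $n$ combined with the chosen residue modulo $t+1$ rules out a consistent global solution.

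The main obstacle in both parts is making the cycle-parity obstruction precise: enumerating the forbidden colour pairs imposed by the distance-$t$ constraint between consecutive cycle positions, deriving the cyclic system of forced colour-set relations, and checking that this system has no solution when $n$ is odd. For the simplest cases ($t=1$ vertex, $t=2$ edge) the verification is short and explicit (recovering, for instance, the classical observation that $C_5$ with $d-2$ pendants at each vertex has strong chromatic index strictly greater than $2d-1$); for larger $t$ and $d$ the same skeleton applies with more elaborate bookkeeping of the way the depth-$t/2$ (resp.\ depth-$(t-1)/2$) subtree colours interact with the cycle-edge (resp.\ cycle-vertex) colour sequence.
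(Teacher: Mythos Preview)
Your constructions coincide with the paper's in both parts (an odd cycle with $d-2$ copies of $\mathcal{T}_{t/2,d}$, resp.\ $\mathcal{T}_{(t-1)/2,d}$, rooted at each cycle vertex), and the high-level idea of an odd-cycle parity obstruction is correct. However, the step you flag as ``the main obstacle'' is essentially the whole proof, and you have not carried it out. Your choice of local object $A_i$ (the colours on the $d$ edges incident to $v_i$) is only the right one for $t=2$; it does not produce a clean recursion for larger $t$, and the side condition $n\not\equiv 0\pmod{t+1}$ in the vertex case is a red herring---the paper's argument works for \emph{every} odd $\ell\ge t+2$.

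The missing device is to layer by distance parity. For odd $t$, set $X_i=\{v:\text{dist}(v,u_i)\text{ is odd and }\le(t-1)/2\}$; then $X_i\cap X_{i+1}=\emptyset$, $|X_i|+|X_{i+1}|=\tau_t(d)$, and $X_i\cup X_{i+1}$ is a clique in $G^t$, so in any $\tau_t(d)$-colouring $c(X_i)$ and $c(X_{i+1})$ partition the palette. Hence $c(X_i)=c(X_{i+2})$ exactly, and oddness of $\ell$ forces $c(X_1)=c(X_\ell)$, contradicting that $X_1\cup X_\ell$ is a clique with $X_1\cap X_\ell=\emptyset$. For even $t$, set $X'_i=\{e:\text{dist}(e,u_i)\text{ is even and }\le t/2-1\}$; then $X'_i\cap X'_{i+1}=\{u_iu_{i+1}\}$, $|X'_i\cup X'_{i+1}|=\tau'_t(d)$, and again this is a clique in $L(G)^t$, whence $|c'(X'_i)\bigtriangleup c'(X'_{i+2})|\le 2$. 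Iterating over $i=1,3,\dots,\ell-2$ shows $c'(X'_1)$ and $c'(X'_\ell)$ share at least two colours when $\ell\le(\tau'_t(d)+1)/2$, contradicting that $X'_1\cup X'_\ell$ is a clique meeting only in $u_\ell u_1$. Once these parity-layered sets are written down, no ``elaborate bookkeeping'' is required.
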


\begin{proof}
For distance-$t$ edge-colouring, fix $t\ge2$ even and $d$ large enough. The construction of $G$ is as follows. Take an odd cycle of length $\ell\in\{(\tau'_t(d)-1)/2,(\tau'_t(d)+1)/2\}$ and to each vertex root $d-2$ copies of the tree ${\mathcal T}_{t/2,d}$. This is a planar graph of maximum degree $d$ and girth at least $(\tau'_t(d)-1)/2$.

Write the cycle as $u_1\cdots u_\ell u_1$. Suppose for a contradiction that there is some distance-$t$ edge-colouring $c': E(G) \to \{1,\dots,\tau'_t(d)\}$.
Let us write $N'_k(v)$ for the set of edges at distance at most $k$ from the vertex $v$.
For each $i\in \{1,\dots,\ell\}$, let $X'_i = \cup\{N'_j(u_i)\setminus N'_{j-1}(u_i) :j\le t/2-1\text{ even}\}$.
Note that  $\ell > t$ for $d$ large enough and so $X'_i \cap X'_{i+1} = \{u_iu_{i+1}\}$ and $|X'_i|+|X'_{i+1}|-1=\tau'_t(d)$ for all $i$.
Since $c'$ is a distance-$t$ colouring and $X'_i\cup X'_{i+1}$ induces a clique in $L(G)^t$, it follows for every $i$ that the colour sets $c'(X'_i)$ and $c'(X'_{i+2})$ have a symmetric difference of size at most $2$.
If we apply this fact in sequence, $i=1,3,5,\dots,\ell-2$, then since $\ell \le (\tau'_t(d)+1)/2$ we derive that $c'(X'_\ell)$ and $c'(X'_1)$ have a colour in common, a contradiction.

\smallskip

For distance-$t$ vertex-colouring, fix $t\ge1$ odd and $d\ge 3$. The construction of $G$ is similar. Take an odd cycle of length $\ell$ at least $t+2$ and to each vertex root $d-2$ copies of the tree ${\mathcal T}_{(t-1)/2,d}$. This is a planar graph of maximum degree $d$ and girth $\ell$ (which can be taken arbitrarily large).

Write the cycle as $u_1\cdots u_\ell u_1$. Suppose for a contradiction that there is some distance-$t$ vertex-colouring $c: V(G) \to \{1,\dots,\tau_t(d)\}$.
For each $i\in \{1,\dots,\ell\}$, let $X_i = \cup\{N_j(u_i)\setminus N_{j-1}(u_i) :j\le (t-1)/2\text{ odd}\}$.
Note since $\ell>t$ that $X_i \cap X_{i+1} = \emptyset$ and $|X_i|+|X_{i+1}|=\tau_t(d)$ for all $i$.
Since $c$ is a distance-$t$ colouring and $X_i\cup X_{i+1}$ induces a clique in $G^t$, it follows for every $i$ that the colour sets $c(X_i)$ and $c(X_{i+2})$ are equal, implying in particular that $c(X_\ell)=c(X_1)$, a contradiction.
\end{proof}

\noindent
The examples of Proposition~\ref{prop:other} all simply build upon an odd cycle. It is conceivable that some finite girth constraint always ensures that the distance colouring problems in {\em bipartite} planar graphs resembles that of trees.

\subsubsection*{Acknowledgements}

The first author would like to thank Magn\'us Halld\'orsson for helpful discussions at a very early stage of this work.
We are grateful to an anonymous referee for helpful comments and suggestions.

\bibliographystyle{abbrv}
\bibliography{distplanar}

\end{document}